\documentclass[reqno]{amsart}
\usepackage{bbm,amsmath, amsfonts, amsthm, amssymb}
\usepackage{bm}
\usepackage[margin=3cm]{geometry}
\usepackage{xcolor}
\usepackage{graphicx,tabularx}

\usepackage{mathtools}
\mathtoolsset{showonlyrefs} 
\graphicspath{{Figures/}}

\setcounter{tocdepth}{2}
\let\oldtocsection=\tocsection
\let\oldtocsubsection=\tocsubsection
\let\oldtocsubsubsection=\tocsubsubsection
\renewcommand{\tocsection}[2]{\hspace{0em}\oldtocsection{#1}{#2}}
\renewcommand{\tocsubsection}[2]{\hspace{1em}\oldtocsubsection{#1}{#2}}
\renewcommand{\tocsubsubsection}[2]{\hspace{2em}\oldtocsubsubsection{#1}{#2}}

\usepackage[textsize=tiny]{todonotes}
\setlength{\marginparwidth}{2.8cm}

\newcommand{\EE}{\mathbb{E}}
\newcommand{\NN}{\mathbb{N}}

\newcommand{\RR}{\mathbb{R}}

\newcommand{\D}{\mathrm{d}}

\newcommand{\dr}{\mathrm{d}r}
\newcommand{\dt}{\mathrm{d}t}

\newcommand{\Df}{\mathrm{D}}
\newcommand{\E}{\mathrm{e}}
\newcommand{\Bb}{\mathcal{B}}
\newcommand{\Cc}{\mathcal{C}}
\newcommand{\Ff}{\mathcal{F}}

\newcommand{\Ww}{\mathcal{W}}
\newcommand{\rrho}{\overline{\rho}}
\newcommand{\ind}{1\hspace{-2.2mm}{1}}

\newcommand{\half}{\frac{1}{2}}

\newcommand{\ww}{\bm{\omega}}

\newcommand{\sigb}{\boldsymbol{\sigma}}
\newcommand{\Zb}{\bm{Z}}

\newcommand{\Yb}{\bm{Y}}

\newcommand{\Vbr}{\breve{V}}
\newcommand{\ubr}{\breve{u}}
\newcommand{\Xbr}{\breve{X}}

\newcommand{\Ybr}{\breve{Y}}
\newcommand{\Zbr}{\breve{Z}}
\newcommand{\Zbbr}{\breve{\bm{Z}}}

\newcommand{\notthis}[1]{}
\newtheorem{theorem}{Theorem}[section]

\newtheorem{proposition}[theorem]{Proposition}

\theoremstyle{definition}

\newtheorem{remark}[theorem]{Remark}
\newtheorem{assumption}[theorem]{Assumption}
\theoremstyle{plain}

\numberwithin{equation}{section}


\definecolor{ocean}{rgb}{0,0.1,0.6}

\definecolor{imperialGreen}{RGB}{2,137,59}
\definecolor{imperialBlue}{RGB}{0, 62, 116}
\definecolor{imperialBrick}{RGB}{165,25,0}
\definecolor{imperialProcess}{RGB}{0,133,202}

\usepackage{hyperref}
\hypersetup{colorlinks=false, linkbordercolor=imperialProcess,
citebordercolor=imperialBrick}

\usepackage[foot]{amsaddr}

\author{Ofelia Bonesini}
\address{Department of Mathematics, Imperial College London}
\email{obonesin@ic.ac.uk}

\author{Antoine Jacquier}
\address{Department of Mathematics, Imperial College London, and the Alan Turing Institute}
\email{a.jacquier@imperial.ac.uk}

\title{$\mathfrak{X}$PDE for $\mathfrak{X} \in \{\mathrm{BS},\mathrm{FBS}, \mathrm{P}\}$: a rough volatility context}
\date{\today}

\subjclass[2020]{60G22, 35K10, 65C20, 91G20, 91G60}
\keywords{Rough volatility, path-dependent PDEs, weak rates, stochastic Volterra equations}

\thanks{OB and AJ are supported by the EPSRC grant EP/T032146/1.
The authors are grateful to C. Bayer, A. Pannier and C. Cuchiero for insightful comments.}

\allowdisplaybreaks

\begin{document}
\begin{abstract}
Recent mathematical advances in the context of rough volatility have highlighted interesting and intricate connections between path-dependent partial differential equations and backward stochastic partial differential equations.
In this note, we make this link precise--in Proposition~\ref{prop:Identity}--identifying the slightly obscure random field introduced in~\cite{bayer2022pricing} as a pathwise derivative of the value function.
\end{abstract}

\maketitle

\section{Introduction}
Since the simple yet far-reaching observation by Gatheral, Jaisson and Rosenbaum~\cite{gatheral2018volatility}--and supported by more in-depth evidence since~~\cite{bolko2022gmm, romer2022empirical}-- that historical volatility of major financial indices and single stocks exhibited a behaviour akin to that of fractional Brownian motion, 
research has flourished in this new \emph{rough volatility} field; 
more advanced models have been proposed~\cite{abi2019affine, bayer2022pricing, el2018microstructural, jacquier2021rough},
new statistical methods have been developed to estimate the H\"older regularity of the volatility process~\cite{bolko2022gmm, fukasawa2022consistent} and
subtle numerical Monte Carlo schemes--with rates of convergence--have been devised~\cite{bayer2020weak, bonesini2023rough, bennedsen2017hybrid, BFG16, friz2022weak, gassiat2022weak}.

While classical Markovian stochastic volatility models can be handled
by PDE techniques thanks to Feynam-Ka\'c, 
this approach was not obviously applicable in this new, non-Markovian, rough volatility setting.
Viens and Zhang~\cite{viens2019martingale} however proved that an It\^o formula was valid, opening the gates to PDE-based tools, 
albeit in an infinite-dimensional setting.
Inspired by this intuition, Bayer, Qiu and Yao~\cite{bayer2022pricing} and Bonesini, Jacquier and Pannier~\cite{bonesini2023rough} set forth an analysis respectively based on a Backward Stochastic PDE (BSPDE) approach and on a path-dependent PDE (PPDE) formulation,
covering both theoretical well-posedness of such equations
and tackling their numerical and convergence aspects.
We note in passing that other investigations of a numerical scheme (based on neural networks) for such PPDEs were also pushed forward in~\cite{jacquier2023deep, saporito2021path}, yet leaving aside the exact study of existence and uniqueness.

In this note, we shed light on a question left hanging loose after reading these last two papers:
since both develop a framework for the value function--say the price of a European option--the two obtained representations, proved to be unique, should thus coincide.
In Section~\ref{sec:Identif}, 
we first recall these two setups 
(Section~\ref{sec:Bayer} for Bayer-Qiu-Yao 
and Section~\ref{sec:Bonesini} for Bonesini-Jacquier-Pannier), 
compare them precisely in Section~\ref{sec:Comparison}
and provide the missing identification 
in Section~\ref{sec:Terms}.
Because these two frameworks are in fact
more general than simple rough volatility models, 
we make it explicit in Section~\ref{sec:roughVol} how they take shape 
in such context, of major interest in mathematical finance.

\newpage

\section{The identification of the BSPDEs}\label{sec:Identif}

\subsection{Bayer-Qiu-Yao: From 
$\mathfrak{X} = \mathrm{BS}$ to $\mathfrak{X} = \mathrm{FBS}$}
\label{sec:Bayer}
In~\cite{bayer2022pricing}, Bayer, Qiu and Yao investigate the weak solution theory to the following BSPDE~\cite[Equation (2.1)]{bayer2022pricing}, in the sense of distributions:
\begin{align}\label{eq:BSPDE_bayer}
&\D \ubr_t(x)
= \psi_t(x) \D W_t\\
& \quad - \left\{ \frac{\Vbr_t}{2} \Df^2 \ubr_t(x) +\rho \sqrt{\Vbr_t} \Df\psi_t(x)  - \frac{\Vbr_t}{2} \Df \ubr_t(x) + F_t\left(\E^x, \ubr_t(x), \rrho\sqrt{\Vbr_t} \Df \ubr_t(x), \psi_t(x) +\rho\sqrt{\Vbr_t} \Df \ubr_t(x)\right)\right\}\D t,
\end{align}
for $(t,x) \in [0,T)\times\RR$, with boundary condition $\ubr_T(\cdot)=G(\E^{\cdot})$ on~$\RR$, where $\bm{W} = (W,B)$ is a standard two-dimensional Brownian motion,~$V$ a given continuous, non-negative, integrable, process adapted to~$\Ff^W$,
    $\Df$ denotes the total derivative,
    and $\rrho:=\sqrt{1-\rho^2}$ with $\rho \in [-1,1]$.
    Note that the searched solution is the couple $(\ubr,\psi)$ (as a coupled random field), and not only~$\ubr$.
The functions~$F$ and~$G$ are assumed to satisfy the following:
\begin{assumption}\label{assu:Bayer}\ 
    \begin{itemize}
    \item The map $G:(\Omega\times\RR, \Ff_T^W\otimes\Bb(\RR))\to (\RR, \Bb(\RR))$ has at most linear growth;
    \item The function $F:(\Omega\times [0,T]\times \RR^4, \Ff^W\otimes\Bb(\RR^4))\to (\RR, \Bb(\RR))$ satisfies
    \begin{itemize}
    \item Lipschitzianity in its last three variables almost surely;
    \item linear growth in the hyperplane $(\RR,0,0,0)$ almost surely;
    \item $|F_t(x,y,z_1,z_2) - F_t(x,y,0,0)| \leq L_0$ almost surely 
    for some $L_0>0$ for all $x,y,z_1,z_2\in\RR$.
    \end{itemize}
    \end{itemize}
    \end{assumption}
    
    To the BSPDE~\eqref{eq:BSPDE_bayer}, Bayer, Qiu and Yao \emph{naturally} associate the FBSDE, for $0\leq t\leq s\leq T$,
    \begin{equation}\label{eq:BSDE_Bayer}
    \left\{
    \begin{array}{rl}
        \Xbr^{t,x}_s 
        & =  \displaystyle x - \int_t^s \frac{\Vbr_t}{2} \D r +\int_t^s \rho \sqrt{\Vbr_t} \D W_r +\int_t^s \rrho \sqrt{\Vbr_t} \D B_r,\\
        \Ybr^{t,x}_s & = \displaystyle 
        G\left(\E^{\Xbr^{t,x}_T}\right) 
        -\int_{s}^{T}\Zbbr^{t,x}_r\cdot\D \bm{W}_r\\
        & \displaystyle \quad
        + \int_{s}^{T} F_r\left(\E^x, \ubr_r(\Xbr^{t,x}_r),  \rrho\sqrt{\Vbr_r} \,\Df \ubr_r(\Xbr^{t,x}_r), \psi_r(\Xbr^{t,x}_r) +\rho\sqrt{\Vbr_r} \,\Df \ubr_r(\Xbr^{t,x}_r) \right)\D r,
        \end{array}
    \right.
    \end{equation}
for a two-dimensional process~$\Zbr^{t,x}$, the existence and characterisation of which are given as follows:
\begin{theorem}[Theorem~2.4 in~\cite{bayer2022pricing}]\label{thm:Bayer}
    Let Assumption~\ref{assu:Bayer} hold and assume that $(\ubr,\psi)$ is a weak solution to~\eqref{eq:BSPDE_bayer} with at most exponential growth for~$\ubr_t$ for each~$t$.
    Then $(\ubr,\psi)$ admits a version satisfying
    $$
        \Ybr^{t,x}_s = \ubr_s(\Xbr^{t,x}_s),\qquad
        \Zbbr^{t,x}_s = \left(\Zbr^{t,x}_{1,s}, \Zbr^{t,x}_{2,s}\right)
         = \left(\rrho \sqrt{\Vbr_s} \,\Df \ubr_s(\Xbr^{t,x}_s), \rho\sqrt{\Vbr_s} \,\Df \ubr_s(\Xbr^{t,x}_s)+ \psi_s(\Xbr^{t,x}_s)\right),
    $$
    almost surely for all $s \in [t,T]$, where $\left(\Ybr^{t,x}, \Zbbr^{t,x}\right)$ 
    satisfies the BSDE~\eqref{eq:BSDE_Bayer}.
\end{theorem}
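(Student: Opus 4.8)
The plan is to prove this nonlinear Feynman--Kac--type identity by composing the random field $\ubr$ with the forward diffusion $\Xbr^{t,x}$ and invoking an It\^o--Wentzell (It\^o--Kunita) formula. Fix $(t,x)$ and abbreviate $\Xbr := \Xbr^{t,x}$, whose dynamics read $\D\Xbr_s = -\half\Vbr_s\,\D s + \rho\sqrt{\Vbr_s}\,\D W_s + \rrho\sqrt{\Vbr_s}\,\D B_s$. Since $W \perp B$ and $\rho^2 + \rrho^2 = 1$, we have $\D\langle\Xbr\rangle_s = \Vbr_s\,\D s$ and $\D\langle W,\Xbr\rangle_s = \rho\sqrt{\Vbr_s}\,\D s$. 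As the field satisfies $\D\ubr_s(x) = \psi_s(x)\,\D W_s - \mathcal{A}_s(x)\,\D s$ with martingale part driven only by $W$, the It\^o--Wentzell formula applied to $s \mapsto \ubr_s(\Xbr_s)$ produces, beyond the field's own drift and martingale evaluated along $\Xbr$, the transport term $\Df\ubr_s(\Xbr_s)\,\D\Xbr_s$, the second-order term $\half\Df^2\ubr_s(\Xbr_s)\,\D\langle\Xbr\rangle_s$, and the mixed term $\Df\psi_s(\Xbr_s)\,\D\langle W,\Xbr\rangle_s$.

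The decisive step is the substitution of the BSPDE drift $\mathcal{A}_s = \half\Vbr_s\Df^2\ubr_s + \rho\sqrt{\Vbr_s}\Df\psi_s - \half\Vbr_s\Df\ubr_s + F_s(\cdots)$ into this expansion and the collection of all $\D s$-terms. The three additional contributions just listed---namely $-\half\Vbr_s\Df\ubr_s(\Xbr_s)$ from the drift of $\Xbr$, $+\half\Vbr_s\Df^2\ubr_s(\Xbr_s)$ from the quadratic variation, and $+\rho\sqrt{\Vbr_s}\Df\psi_s(\Xbr_s)$ from the cross-variation---cancel termwise against the corresponding operator terms in $-\mathcal{A}_s(\Xbr_s)$, leaving exactly $-F_s(\cdots)\,\D s$. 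Reading off the martingale part, one obtains $\D\ubr_s(\Xbr_s) = -F_s(\cdots)\,\D s + \bigl(\psi_s(\Xbr_s) + \rho\sqrt{\Vbr_s}\Df\ubr_s(\Xbr_s)\bigr)\D W_s + \rrho\sqrt{\Vbr_s}\Df\ubr_s(\Xbr_s)\,\D B_s$.

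Next I would match this semimartingale decomposition against~\eqref{eq:BSDE_Bayer}. The terminal condition $\ubr_T(\cdot) = G(\E^{\cdot})$ gives $\ubr_T(\Xbr_T) = G(\E^{\Xbr_T})$, so the process $(\ubr_s(\Xbr_s))_{s\in[t,T]}$, together with the two martingale integrands identified above, solves the same backward equation as $(\Ybr^{t,x}, \Zbbr^{t,x})$. Uniqueness of the semimartingale decomposition (and well-posedness of the Lipschitz BSDE under Assumption~\ref{assu:Bayer}) then forces $\Ybr^{t,x}_s = \ubr_s(\Xbr^{t,x}_s)$ and, matching the $\D W_s$- and $\D B_s$-coefficients with those of $\Zbbr^{t,x}\cdot\D\bm{W}$, the claimed identification of the components $\Zbr^{t,x}_{1,s}$ and $\Zbr^{t,x}_{2,s}$.

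The main obstacle is the rigorous justification of the It\^o--Wentzell step: since $(\ubr,\psi)$ is only a \emph{weak} solution in the sense of distributions, the derivatives $\Df^2\ubr$ and $\Df\psi$ need not exist classically, $\ubr_s(\cdot)$ is merely of exponential growth, and the pointwise evaluation $\ubr_s(\Xbr_s)$ must be given meaning. I would resolve this by a mollification argument---regularising the field in space, applying the classical It\^o--Wentzell formula to the smooth approximations, and passing to the limit using the weak-solution identities together with the exponential-growth and integrability controls afforded by Assumption~\ref{assu:Bayer}. This limiting procedure is precisely where the word ``version'' enters: it selects a jointly measurable representative of $(\ubr,\psi)$ along $\Xbr^{t,x}$ for which the evaluations are well defined and the two displayed identities hold almost surely, simultaneously for all $s \in [t,T]$.
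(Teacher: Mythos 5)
Your proposal is correct and takes essentially the same route as the paper and its cited source: Theorem~\ref{thm:Bayer} is quoted from~\cite{bayer2022pricing}, and the only proof ingredient the note itself invokes is precisely the distributional It\^o--Wentzell formula of~\cite[Lemma~2.3]{bayer2022pricing} applied to~\eqref{eq:BSPDE_bayer} along the forward dynamics---exactly your composition-plus-cancellation argument producing~\eqref{Eq:bayer_duX}---followed by identification with the unique solution of the Lipschitz BSDE~\eqref{eq:BSDE_Bayer}. Two minor remarks: your drift $-F_s\,\D s$ carries the sign consistent with~\eqref{eq:BSDE_Bayer} (the $+F_s\,\D s$ printed in~\eqref{Eq:bayer_duX} is a typo in the note), and the mollification procedure you sketch for the weak-solution issue is exactly what the cited Lemma~2.3 packages, so your deferral of it is appropriate rather than a gap.
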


Let us further stress that, applying~\cite[Lemma 2.3]{bayer2022pricing} to~\eqref{eq:BSPDE_bayer} and exploiting the dynamics~\eqref{eq:BSDE_Bayer}, 
    \begin{align}\label{Eq:bayer_duX}
        \D \ubr_s(\Xbr^{t,x}_s) 
        & =  F_s\left(\E^x, \ubr_s(\Xbr^{t,x}_s), \rrho \sqrt{\Vbr_s} \Df \ubr_s(\Xbr^{t,x}_s) +\rho\sqrt{\Vbr_s} \Df \ubr_s(\Xbr^{t,x}_s)\right) \D s \\
        & \quad + \left(\psi_s(\Xbr^{t,x}_s)+ \rho \sqrt{\Vbr_s} \Df \ubr_s(\Xbr^{t,x}_s)\right) \D W_s + \rrho \sqrt{\Vbr_s} \Df \ubr_s(\Xbr^{t,x}_s) \D B_s.
    \end{align}

\subsection{Bonesini-Jacquier-Pannier: The
$\mathfrak{X} = \mathrm{P}$ approach.}\label{sec:Bonesini}
    
In~\cite{bonesini2023rough}, Bonesini, Jacquier and Pannier consider the system
    \begin{equation}\label{eq:MainSVE_our}
    \left\{
    \begin{array}{rll}
        \bm{X}_t & = \displaystyle x + \int_0^t \bm{b}(t,r,\bm{X}_r)\dr + \int_0^t \sigb(t,r,\bm{X}_r)\,\D \bm{W}_r,\\
        \bm{\Theta}^t_s & = \displaystyle x + \int_0^t \bm{b}(s,r,\bm{X}_r)\dr + \int_0^t \sigb(s,r,\bm{X}_r)\,\D \bm{W}_r,
    \end{array}
        \right.
    \end{equation}
    for $0\leq t\leq s\leq T$,
    where $x \in \RR^d$, $\mathbf{W}$ is an $m$-dimensional Brownian motion, $\sigb(\cdot)$ takes values in $\RR^{d\times m}$ and~$\bm{b}(\cdot)$ in $\RR^{d}$.
    Conditions on~$\bm{b}$ and~$\sigb$ do not need to be explictly stated (as argued in~\cite{bonesini2023rough}) as long as the system admits a weak solution such that all moments of~$\bm{X}$ are finite,
    but would like to emphasise that the process~$\bm{X}$ is clearly not Markovian in general.    .
    For functionals of these processes,  \cite[Theorem~2.11]{bonesini2023rough}, based on~\cite[Theorem 3.17]{viens2019martingale}, established the functional  It\^{o} formula
    \begin{align}\label{eq:f_ito_our}
        \D u(t,\bm{X}\otimes_t \bm{\Theta}^t)
         & = \left\{\partial_t u(t,\bm{X}\otimes_t \bm{\Theta}^t)
          + \half \langle\partial_{\bm{\omega}\bm{\omega}} u(t,\bm{X}\otimes_t \bm{\Theta}^t),(\sigb^{t,\bm{X}},\sigb^{t,\bm{X}})\rangle
         + \langle \partial_{\bm{\omega}} u(t,\bm{X}\otimes_t \bm{\Theta}^t),\bm{b}^{t,\bm{X}}\rangle \right\}\dt
         \nonumber\\
        & \quad + \langle \partial_{\bm{\omega}} u(t,\bm{X}\otimes_t \bm{\Theta}^t), \sigb^{t,\bm{X}}\rangle \,\D \bm{W}_t,
    \end{align}
with boundary condition $u(T,\bm{X})= \Phi(\bm{X})$, with $\Phi:\Ww\to\RR^{d'}$ for some $d'\in\NN$,  $\Ww := \Cc^0([0,T],\RR^d)$ is the space of continuous functions from $[0,T]$ to~$\RR^d$ and~$\varphi^{t,\bm{\omega}}(s):=\varphi(s;t,\bm{\omega}_t)$, for $\varphi \in \{ b, \sigma\}$ and $\bm{\omega}\in\Ww$.
The concatenation operator~$\otimes$ is borrowed from~\cite{viens2019martingale} and should be understood as
$\bm{X}\otimes_t \bm{\Theta}^t := \bm{X}\ind_{[0,t]} + \bm{\Theta}^t\ind_{[t,T]}$.
We also refer to~\cite{viens2019martingale} for a precise definition of the inner product $\langle\cdot, \cdot\rangle$.
Define the forward-backward stochastic Volterra equation, for~$\ww\in\Ww$ and all~$0 \leq t\le s\le T$:  
    \begin{equation}\label{eq:FBSDE_our0}
    \left\{
    \begin{array}{rl}
        \bm{X}_s^{t,\ww} &= \displaystyle \ww_s + \int_t^s \bm{b}(s,r,\bm{X}_r^{t,\ww}) \dr
         + \int_t^s \sigb(s,r,\bm{X}_r^{t,\ww}) \,\D \bm{W}_r, \\
        \Yb_s^{t,\ww} &= \displaystyle \Phi(\bm{X}^{t,\ww}_\cdot) +\int_s^T f(r,\bm{X}^{t,\ww}_\cdot,\Yb_r^{t,\ww},\Zb_r^{t,\ww}) \dr
        - \int_s^T \Zb^{t,\ww}_r \cdot\D \bm{W}_r,
        \end{array}
        \right.
    \end{equation}
    with $\ww_0 = x$.
The forward process~$\bm{X}$ lives in~$\RR^d$ and the backward one~$\Yb$ in~$\RR^{d'}$.
Moreover the function $f:[0,T]\times\Ww\times\RR^{d'}\times\RR^{d'\times d}\to\RR^{d'}$ is assumed to be measurable in all variables.
Under a suitable set of assumptions, the backward SDE admits a unique square integrable solution~\cite[Theorem~4.3.1]{zhang2017backward}.
In~\cite[Proposition~2.14]{bonesini2023rough}, the authors further proved that
$u(t,\ww):=\Yb^{t,\ww}_t$
    satisfies the semi-linear path-dependent PDE (in the classical sense)
    \begin{equation}\label{eq:PPDE_u}
       \partial_t u(t,\bm{\omega})
        + \half \langle\partial_{\bm{\omega}\bm{\omega}} u(t,\bm{\omega}),(\sigb^{t,\bm{\omega}},\sigb^{t,\bm{\omega}})\rangle
         + \langle \partial_{\bm{\omega}} u(t,\bm{\omega}),\bm{b}^{t,\bm{\omega}}\rangle
         + f\left(t,\bm{\omega},u(t,\bm{\omega}),\left\langle \partial_{\bm{\omega}} u(t,\bm{\omega}),\sigb^{t,\bm{\omega}}\right\rangle\right) = 0, 
    \end{equation}
    for $t \in [0,T)$, with boundary condition $u(T,\bm{\omega}) = \Phi(\bm{\omega})$.
    Now, applying It\^{o}'s formula~\eqref{eq:f_ito_our} and using the PPDE~\eqref{eq:PPDE_u}, 
    we can write, for $0\leq t \leq s \leq T$,
$$
      \D u\left(s,\widehat{\bm{X}}^{t,\ww,s}\right)
       = -f\left(s, u(s,\widehat{\bm{X}}^{t,\ww,s}) , \left\langle\partial_{\bm{\omega}} u(s,\widehat{\bm{X}}^{t,\ww,s}), \sigb^{s,\widehat{\bm{X}}^{t,\ww,s}} \right\rangle \right) \dt
      + \left\langle\partial_{\bm{\omega}} u(s,\widehat{\bm{X}}^{t,\ww,s}), \sigb^{s,\widehat{\bm{X}}^{t,\ww,s}} \right\rangle \D \bm{W}_s,
$$
where $\widehat{\bm{X}}^{t,\ww,r}_{\cdot} := 
\bm{X}^{t,\ww}_{\cdot}\otimes_{r}\widetilde{\bm{X}}^{t,\ww}_{r,\cdot}$
denotes the concatenation process, with~$\widetilde{\bm{X}}^{t,\ww}$ defined as the (unique, weak) solution to
$$
\widetilde{\bm{X}}_{r,s}^{t,\ww}
 = \ww_s + \int_t^r \bm{b}(s,r',\bm{X}_{r'}^{t,\ww}) \D r'
 + \int_t^r \sigb(s,r',\bm{X}_{r'}^{t,\ww}) \,\D \bm{W}_ {r'},
$$
for any $0\leq t\leq r\leq s\leq T$.
Combined with the boundary condition and the fact that the BSDE~\eqref{eq:FBSDE_our0} has a unique solution, then
    \begin{align}
        \left(\Yb^{t,\ww}_s,\Zb^{t,\ww}_s\right)
        =\left(u(s,\widehat{\bm{X}}^{t,\ww,s}), \left\langle\partial_{\bm{\omega}} u(s,\widehat{\bm{X}}^{t,\ww,s}), \sigb^{s,\widehat{\bm{X}}^{t,\ww,s}} \right\rangle \right)
    \end{align}
    is in fact the unique solution to the BSDE~\eqref{eq:FBSDE_our0}.

\subsection{Comparison}\label{sec:Comparison}
We compare in this section the two setups in the most general setting possible, 
namely that described in~\cite{bayer2022pricing}, which does not assume a specific structure for the process~$V$, as opposed to the (rather general yet of specific Volterra form) one used in~\cite{bonesini2023rough}.
The key idea is to exploit the fact that both the BSPDE~\eqref{eq:BSPDE_bayer} and the PPDE~\eqref{eq:PPDE_u} are linked to an FBSDE possessing a unique solution and to identify the terms.
First of all, set the coefficients in~\eqref{eq:FBSDE_our0} to be
\begin{equation}\label{eq:Coeffs}
\bm{X} = \begin{pmatrix}
V\\X
\end{pmatrix},
\qquad
\bm{W} = 
 \begin{pmatrix}
W\\B
\end{pmatrix},\qquad
        \bm{b}(t,r,\bm{x})
        =
        \begin{pmatrix}
            b(t,r,v)\\
             -\frac{1}{2}v
        \end{pmatrix},
        \qquad
        \sigb(t,r,\bm{x})
        =
        \begin{pmatrix}
            \sigma(t,r,v) & 0\\
            \rho \sqrt{v} & \rrho \sqrt{v}
        \end{pmatrix},
\end{equation}
with~$W$ and~$B$ independent one-dimensional Brownian motions and $\bm{x} = (v,x) \in [0,\infty) \times \RR$.

\begin{remark}
This framework is slightly less general than the one in~\cite{bayer2022pricing}, since the latter makes no assumption whatsoever on the form of the dynamics of~$V$. 
Note that the coefficient in position $(1,2)$ in the matrix~$\sigb(\cdot)$ is null, reflecting the assumption, as in~\cite{bayer2022pricing} and all cases of interest in quantitative finance, 
that the process~$V$ is adapted to~$\Ff^W$ and not to~$\Ff^B$.
\end{remark}

In full coordinates, we then have, starting from time $t=0$,
    \begin{equation*}
    \left\{
    \begin{array}{rl}
        V_t & = \displaystyle v + \int_0^t b(t,r,V_r)\dr + \int_0^t \sigma(t,r,V_r)\,\D W_r,\\
        X_t & = \displaystyle x - \frac{1}{2}\int_0^t V_r \D r + \rho \int_0^t \sqrt{V_r} \D W_r
        + \rrho \int_0^t \sqrt{V_r} \D B_r,
    \end{array}
    \right.
    \end{equation*}
    for some $v>0$,
    and so, for the forward part of the FBSDE \eqref{eq:FBSDE_our0}, we can write, for $0\leq t\leq s\leq T$,
    \begin{equation*}
    \left\{
    \begin{array}{rl}
        V^{t,\omega}_s & = \displaystyle\omega_s
        + \int_t^s b\left(s,r,V^{t,\omega}_r\right)\dr + \int_t^s \sigma\left(s,r,V^{t,\omega}_r\right)\D W_r,\\
        X^{t, (\omega,x)}_s & = \displaystyle x - \frac{1}{2}\int_t^s V^{t,\omega}_r \D r + \rho \int_t^s \sqrt{V^{t,\omega}_r} \D W_r + \rrho \int_t^s \sqrt{V^{t,\omega}_r} \D B_r,
    \end{array}
    \right.
    \end{equation*}
with $\omega_0 = v$.
Note that the functionals studied in~\cite{bayer2022pricing} only 
depend on the terminal value of the second component, namely
$\Phi((\bm{x_t})_{t \in [0,T]}) = G\left(\E^{x_T}\right)$.
Furthermore, the function $F(\cdot)$ in~\cite{bayer2022pricing} (that is in~\eqref{eq:BSPDE_bayer} and~\eqref{eq:BSDE_Bayer}) determining the dynamics of~$\Ybr$ does not depend on the whole path of $\Xbr$ as well, but just on the value of its second component at a specific time, that is
    \begin{align}
        f(r,(\bm{x_t})_{t \in [0,T]},y,\bm{z}) = F_r\left(\E^{x_r},y,\bm{z}\right),
    \end{align}
    so that we can now write the backward part of the FBSDE~\eqref{eq:FBSDE_our0}
$$
        Y^{t,(\omega,x)}_s = G\left(\E^{X^{t,(\omega,x)}_T}\right)
         + \int_s^T F_r\left(\E^{X^{t,(\omega,x)}_r}, Y^{t,(\omega,x)}_r, \Zb_r^{t,\omega,x}\right) \D r
          - \int_s^T \Zb^{t,\omega,x}_r \cdot\D \bm{W}_r.
$$
The slight abuse of notation between the path~$\ww$ in~\eqref{eq:FBSDE_our0} and 
the (path, point) couple $(\omega, x)$
should hopefully not create any confusion.
\begin{remark}
It is clear that this is the same FBSDE as in~\cite[Equation (2.2)]{bayer2022pricing}--see also~\eqref{eq:BSDE_Bayer} above--and so, by the aforementioned identification of terms, 
we can deduce interesting identities.
\end{remark}

As already noticed, in this case the value function~$u$ is a function of~$X_t$ only and so the PPDE~\eqref{eq:PPDE_u} reduces to
    \begin{align}
        & \partial_t u
        + \langle \partial_{{\omega}} u,b^{t,{\omega}_t}\rangle 
        + \left\langle \partial_{x} u, -\frac{\omega_t}{2}\right\rangle 
         + F_t\left(\E^x, u,\langle \partial_{{\omega}} u,\sigma^{t,\omega_t}\rangle + \rho \sqrt{\omega_t} \partial_x u, \rrho \sqrt{\omega_t} \partial_x u\right)\\
         &  + 
        \half \Bigg\{ \langle\partial_{{\omega}{\omega}} u,(\sigma^{t,{\omega_t}},\sigma^{t,{\omega_t}})\rangle + \langle\partial_{{\omega}{\omega}} u,(0,0)\rangle +\langle\partial_{x {\omega}} u,(\rho \sqrt{\omega_t},\sigma^{t,{\omega_t}})\rangle \\
        & +\langle\partial_{{\omega}x} u,(\rho \sqrt{\omega_t}, 0)\rangle +\left\langle\partial_{xx} u,\left(\rho \sqrt{\omega_t},\rho \sqrt{\omega_t}\right)\right\rangle +\langle\partial_{xx} u,(\rrho \sqrt{\omega_t}, \rrho \sqrt{\omega_t})\rangle  \Bigg\}
      = 0,
    \end{align}
    with~$u$ evaluated at $(t,\omega, x)$, and with boundary condition $u(T, \omega, x) = G(\E^x)$.
    Removing the redundant terms simplifies this expression to
    \begin{align}
        & \partial_t u
        + \half \Bigg\{ \langle\partial_{{\omega}{\omega}} u,(\sigma^{t,{\omega_t}},\sigma^{t,{\omega_t}})\rangle  +\rho \sqrt{\omega_t} \langle\partial_{{\omega}} (\partial_x u),\sigma^{t,{\omega_t}}\rangle + \omega_t \partial_{xx} u \Bigg\} + \langle \partial_{{\omega}} u,b^{t,{\omega}_t}\rangle 
        - \frac{\omega_t}{2} \partial_{x} u \\
        & + F_t\left(\E^x, u,\langle \partial_{{\omega}} u,\sigma^{t,\bm{\omega}_t}\rangle + \rho \sqrt{\omega_t} \partial_x u, \rrho \sqrt{\omega_t} \partial_x u\right) = 0,
    \end{align}
with boundary condition $u(T, \omega, x) = G(\E^x)$.
We conclude this section  with a clarification of the dependency of~$u$ on the paths of~$\omega$ and on the point~$x$.
In matrix form,  the dynamics of the state process~$\bm{X}$ reads
    \begin{align}
\bm{X}_s = 
\begin{pmatrix}
            V_s\\
            X_s
        \end{pmatrix}
        & =
        \begin{pmatrix}
            \omega_0\\
            x
        \end{pmatrix}
        +\int_0^s
        \begin{pmatrix}
            b(s,r, V_r)\\
            -\frac{1}{2}V_r
        \end{pmatrix} \D r 
        +\int_0^s
        \begin{pmatrix}
            \sigma(s,r, V_r) & 0\\
            \rho \sqrt{V_r} & \rrho \sqrt{V_r}
        \end{pmatrix} 
        \begin{pmatrix}
            \D W_r \\
            \D B_r 
        \end{pmatrix}\\
     &  = :
        \begin{pmatrix}
            \Theta^t_s\\
            \Xi^t_s
        \end{pmatrix}
        +\int_t^s
        \begin{pmatrix}
            b(s,r, V_r)\\
            -\frac{1}{2}V_r
        \end{pmatrix} \D r 
        +\int_t^s
        \begin{pmatrix}
            \sigma(s,r, V_r) & 0\\
            \rho \sqrt{V_r} & \rrho \sqrt{V_r}
        \end{pmatrix} 
        \begin{pmatrix}
            \D W_r \\
            \D B_r 
        \end{pmatrix}.
    \end{align}
By uniqueness of solutions, 
$\bm{X}=\bm{X}^{t, \bm{X}\otimes_t \bm{\Theta}^t}$, or equivalently
$V=V^{t, V \otimes_t \Theta^t}$ and 
$X=X^{t, (V \otimes_t \Theta^t, X_t)}$,
    where we have exploited the fact that the equation for~$X$ does not explicitly depend on time, and so
    \begin{align}\label{Eq_Y_PPDE}
        Y_s^{t, \bm{X}}
        & = G\left(\E^{X^{t, (V \otimes_t \Theta^t, X_t)}_T}\right)
         + \int_s^T F_r\left(\E^{X^{t, (V \otimes_t \Theta^t, X_t)}_r}, Y_r^{t, \bm{X}},\Zb_r^{t, \bm{X}}\right) \D r
        -\int_s^T \Zb_r^{t, \bm{X}} \cdot\D W_r \\
        & = u\left(s, V \otimes_t \Theta^t, X_t\right)
        = u(s, \Theta^t, X_t),
    \end{align}
    where in the last step we have exploited the fact that, being $T$ and $r \in [s,T]$ greater that $t$, the ``past" $(V,X)$ does not matter and what matter is $(\Theta^t, \Xi^t)$ (and in our specific case only $(\Theta^t, X_t)$).
    This actually means that we can write $Y_s^{t, (V \otimes_t \Theta^t, X_t)}$ (resp. $\Zb_r^{t, (V \otimes_t \Theta^t, X_t)}$). 
    Let us mention that, with a slight abuse of notation, we keep the notation $u$ for the solution even when omitting the useless dependencies.

\subsection{Identification of the terms}\label{sec:Terms}
The key identification clarifying the slightly obscure random field~$\psi$ introduced in~\cite{bayer2022pricing} reads as follows:
\begin{proposition}\label{prop:Identity}
The following identity holds for all $0\leq t\leq s$:
$$            \psi_s\left(\Xbr^{t,x}_s\right)
 = \left\langle\partial_{{\omega}} u(s,\widehat{\bm{X}}^{t,\ww,s}), \sigma^{s,\widehat{\bm{X}}^{t,\ww,s}} \right\rangle.
$$
\end{proposition}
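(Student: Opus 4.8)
The plan is to capitalise on Section~\ref{sec:Comparison}, where the forward--backward system~\eqref{eq:BSDE_Bayer} of Bayer--Qiu--Yao and the one~\eqref{eq:FBSDE_our0} of Bonesini--Jacquier--Pannier were shown, under the dictionary~\eqref{eq:Coeffs}, to be one and the same FBSDE. Since this FBSDE admits a unique square-integrable solution~\cite[Theorem~4.3.1]{zhang2017backward}, its control process is unique, and hence the two representations of it must agree: $\Zbbr^{t,x}_s = \Zb^{t,\ww}_s$ almost surely, as two-dimensional processes driven by $\bm{W}=(W,B)$. The whole proof then reduces to writing out both sides of this identity in coordinates and reading off a single component.

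First, I would expand the Bonesini--Jacquier--Pannier control. With $\Zb^{t,\ww}_s = \langle \partial_{\bm{\omega}} u(s,\widehat{\bm{X}}^{t,\ww,s}), \sigb^{s,\widehat{\bm{X}}^{t,\ww,s}}\rangle$ and the block form of $\sigb$ in~\eqref{eq:Coeffs}, pairing the (path, point) gradient $(\partial_\omega u, \partial_x u)$ against the two columns of $\sigb$ produces, for the $\D W_s$- and $\D B_s$-coordinates respectively,
$$
\left\langle \partial_\omega u, \sigma^{s,\widehat{\bm{X}}^{t,\ww,s}}\right\rangle + \rho\sqrt{\omega_s}\,\partial_x u
\qquad\text{and}\qquad
\rrho\sqrt{\omega_s}\,\partial_x u.
$$

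Next, I would read the matching coordinates off the Bayer--Qiu--Yao side. The It\^o expansion~\eqref{Eq:bayer_duX} of $\ubr_s(\Xbr^{t,x}_s)$ displays the diffusion coefficients of $\Zbbr^{t,x}_s$ directly, namely $\psi_s(\Xbr^{t,x}_s) + \rho\sqrt{\Vbr_s}\,\Df\ubr_s(\Xbr^{t,x}_s)$ in front of $\D W_s$ and $\rrho\sqrt{\Vbr_s}\,\Df\ubr_s(\Xbr^{t,x}_s)$ in front of $\D B_s$. Equating the $\D B_s$-coefficients, and using $\Vbr_s = \omega_s$ from~\eqref{eq:Coeffs}, identifies the spatial derivatives $\partial_x u = \Df\ubr_s$; inserting this into the equality of the $\D W_s$-coefficients cancels the common term $\rho\sqrt{\omega_s}\,\partial_x u$ and leaves $\psi_s(\Xbr^{t,x}_s) = \langle \partial_\omega u, \sigma^{s,\widehat{\bm{X}}^{t,\ww,s}}\rangle$, which is the claim.

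The main obstacle is bookkeeping rather than analysis: one must reconcile the two papers' orderings of the components of the control process---the statement of Theorem~\ref{thm:Bayer} lists them in the opposite order to the expansion~\eqref{Eq:bayer_duX}---so as to correctly pair $\D W_s$ with $\D W_s$ and $\D B_s$ with $\D B_s$, and one must justify the identification $\partial_x u = \Df\ubr_s$. In the degenerate case $\rho = \pm1$, where $\rrho = 0$ and the $\D B_s$-coefficients are uninformative, this last identification instead follows from the equality of value functions $u(s,\cdot,X_s) = \ubr_s(X_s)$ (both coinciding with the common $\Yb_s = \Ybr_s$) upon differentiating in the spatial point, so the conclusion persists.
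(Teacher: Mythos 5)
Your proof is correct and follows essentially the same route as the paper: both arguments exploit uniqueness of the (common) backward equation's solution to equate the two control processes, expand the Bonesini--Jacquier--Pannier control via the block structure of $\sigb$ in~\eqref{eq:Coeffs}, identify $\Df\ubr_s$ with $\partial_x u$ from the coefficients multiplying $\D B_s$, and cancel the common term in the coefficients multiplying $\D W_s$. If anything, your write-up is slightly more careful than the paper's: you flag the component-ordering discrepancy between Theorem~\ref{thm:Bayer} and the expansion~\eqref{Eq:bayer_duX}, and you patch the degenerate case $\rho=\pm1$ (where the paper's implicit division by $\rrho$ breaks down) by differentiating the identity of the value functions instead.
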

\begin{proof}
Since the processes~$\Ybr$ and~$Y$ (and hence~$\Zbbr$ and~$\Zb$) solve the same equation, namely the backward part of~\eqref{eq:BSDE_Bayer} (in differential form) and~\eqref{Eq_Y_PPDE} (in integral form) respectively, for which existence and uniqueness hold (from Theorem~\ref{thm:Bayer} and \cite[Proposition~2.14]{bonesini2023rough} respectively), the identities are
   \begin{equation*}
   \arraycolsep = 1.4pt
    \begin{array}{rcccrcl}
    \multicolumn{3}{c}{\textit{(Bayer-Qiu-Yao~\cite{bayer2022pricing})}}
     & \qquad\longleftrightarrow \qquad & \multicolumn{3}{c}{
\textit{(Bonesini-Jacquier-Pannier~\cite{bonesini2023rough})}}
    \\
        \ubr_s\left(\Xbr^{t,x}_s\right)
         & = & \Ybr^{t,x}_s 
        & \quad\longleftrightarrow \quad&
        \Yb^{t, (\omega,x)}_s
        & =  & u\left(s, \widehat{\bm{X}}^{t,\ww,s}\right),\\
        \left(\rrho \sqrt{\Vbr_s} \,\Df \ubr_s(\Xbr^{t,x}_s), \rho\sqrt{\Vbr_s} \,\Df \ubr_s(\Xbr^{t,x}_s)+ \psi_s(\Xbr^{t,x}_s)\right)
        & = & \Zbr^{t,x}_s
        & \quad\longleftrightarrow \quad &
        \Zb^{t,(\omega,x)}_s & =  & \left\langle\partial_{\bm{\omega}} u(s,\widehat{\bm{X}}^{t,\ww,s}), \sigb^{s,\widehat{\bm{X}}^{t,\ww,s}} \right\rangle,
    \end{array}
    \end{equation*} 
where, using the formulation of~$\sigb$ in~\eqref{eq:Coeffs}, the last term can be expanded as
$$
\left\langle\partial_{\bm{\omega}} u(s,\widehat{\bm{X}}^{t,\ww,s}), \sigb^{s,\widehat{\bm{X}}^{t,\ww,s}} \right\rangle 
        = \left(\rrho \sqrt{\Theta^t_s} \,\partial_{x} u(s,\widehat{\bm{X}}^{t,\ww,s}), \rho \sqrt{\Theta^t_s} \,\partial_{x} u(s,\widehat{\bm{X}}^{t,\ww,s}) + \left\langle\partial_{{\omega}} u(s,\widehat{\bm{X}}^{t,\ww,s}), \sigma^{s,\widehat{\bm{X}}^{t,\ww,s}} \right\rangle\right).
$$  
This then yields two identifications:
the first one reads
$\rrho \sqrt{\Vbr_s} \,\Df \ubr_s(\Xbr^{t,x}_s) 
        = \rrho \sqrt{\Theta^t_s} \,\partial_{x} u(s,\widehat{\bm{X}}^{t,\ww,s})$,
        which in particular implies
$\sqrt{\Vbr_s} \,\Df \ubr_s(\Xbr^{t,x}_s) 
            = \sqrt{\Theta^t_s} \,\partial_{x} u(s,\widehat{\bm{X}}^{t,\ww,s})$,
while the second is
\begin{align}
\rho\sqrt{\Vbr_s} \,\Df \ubr_s(\Xbr^{t,x}_s)+ \psi_s(\Xbr^{t,x}_s)
& = \rho \sqrt{\Theta^t_s} \,\partial_{x} u(s,\widehat{\bm{X}}^{t,\ww,s}) + \left\langle\partial_{{\omega}} u(s,\widehat{\bm{X}}^{t,\ww,s}), \sigma^{s,\widehat{\bm{X}}^{t,\ww,s}} \right\rangle\\
& = \rho\sqrt{\Vbr_s} \,\Df \ubr_s(\Xbr^{t,x}_s) + \left\langle\partial_{{\omega}} u(s,\widehat{\bm{X}}^{t,\ww,s}), \sigma^{s,\widehat{\bm{X}}^{t,\ww,s}} \right\rangle,
\end{align}
and the lemma follows.
\end{proof}

\section{The prototypical rough volatility model case}
\label{sec:roughVol}  
The case of European option pricing in a prototypical rough volatility model (where the volatility process is driven by a Riemann-Liouville fractional Brownian motion) provides a nice example that may help the reader think about the general case.

\subsection{Comparison of the BSDEs associated to the price of a European option}
In the FBSDEs approach, $F_s(\cdot) \equiv 0$, and so~\eqref{Eq:bayer_duX} simplifies substantially, in the sense of distributions, to
        \begin{align}\label{eq:BSPDE_bayer_rv}
            \left\{
            \begin{array}{rl}
            \D \ubr_s(\Xbr^{t,x}_s)
            & =  \left(\psi_s(\Xbr^{t,x}_s)+ \rho \sqrt{\Vbr_s} \Df \ubr_s(\Xbr^{t,x}_s)\right) 
            \D W_s + \overline{\rho} \sqrt{\Vbr_s} \Df \ubr_s(\Xbr^{t,x}_s) \D B_s,\\
            \ubr_T(\Xbr^{t,x}_T)
            & = G\left(\E^{\Xbr^{t,x}_T}\right).
            \end{array}
            \right.
        \end{align}

    On the PPDE side, in the rough volatility setting, Equation \eqref{eq:FBSDE_our0} reduces to the couple of one-dimensional equations
    \begin{equation*}
    \left\{
    \begin{array}{rl}
        X^{t,(x,\omega)}_s 
        & = \displaystyle x -\half \int_t^s \chi\left(r,V^{t,\omega}_r\right)^2 \D r
         +\rho\int_t^s \chi\left(r,V^{t,\omega}_r\right)\D W_r
         +\rrho\int_t^s \chi\left(r,V^{t,\omega}_r\right) \D B_r,\\
        V^{t,\omega}_s & = \displaystyle \omega_s + \int_t^s K(s-r) \D W_r,
    \end{array}
    \right.
    \end{equation*}
    for a suitable function~$\chi$, and where~$K(\cdot)$ represents the kernel of the fractional Brownian motion.

    \begin{remark}
        For this model to satisfy the assumptions in~\cite{bayer2022pricing}, we need to set
        $\chi(t,V_t) = \sqrt{V_t}.$
    \end{remark}

    In this setting, the functional It\^{o} formula applied to $u_t=u(t,X_t,\Theta^t)=\EE[G(\E^{X_T})|\Ff_t]$ leads to
    \begin{align}\label{eq:BSPDE_our}
        \D u_t 
        & = \Bigg\{ \partial_t u_t + \half\chi(t,V_t)^2 (\partial_{xx}u_t-\partial_{x}u_t) + \rho \chi(t,V_t) \langle \partial_\omega(\partial_x u_t), K^t \rangle + \half \langle \partial_{\omega\omega} u_t, (K^t,K^t) \rangle\Bigg\} \D t \nonumber\\
        & \quad + \bigg\{  \rho \chi(t,V_t)\partial_x u_t + \langle \partial_\omega u_t, K^t \rangle\bigg\} \D W_t +  \overline{\rho} \chi(t,V_t)\partial_x u_t\D B_t\\
        & =  \bigg\{  \rho \chi(t,V_t)\partial_x u_t + \langle \partial_\omega u_t, K^t \rangle\bigg\} \D W_t +  \overline{\rho} \chi(t,V_t)\partial_x u_t\D B_t,\nonumber,
    \end{align}
    where $K^t(\cdot) = K(\cdot-t)$ and 
    where in the last line we have exploited the fact that the drift should be null by the martingale property.
    Thus, rewriting~\eqref{eq:BSPDE_our} in an extended form we have
    \begin{align}\label{eq:BSPDE_our_short}
        \left\{
        \begin{array}{rl}
             \D u(t, X_t, \Theta^t) & =  \bigg\{  \rho \chi(t,V_t)\partial_x u(t, X_t, \Theta^t) + \langle \partial_\omega u(t, X_t, \Theta^t), K^t \rangle\bigg\} \D W_t +  \overline{\rho} \chi(t,V_t)\partial_x u(t, X_t, \Theta^t)\D B_t, \\
             u(T, X_T, \Theta^T) & = G(\E^{X_T}).
        \end{array}
        \right.
    \end{align}
Now, mimicking the proof of Proposition~\ref{prop:Identity}, we identify the terms in~\eqref{eq:BSPDE_bayer_rv} and~\eqref{eq:BSPDE_our_short} respectively,
    and we obtain two representations:
On the one hand, 
$\overline{\rho} \sqrt{\Vbr_t} 
\Df \ubr_t(\Xbr^{0,x}_t)
= \overline{\rho} \chi(t,V_t)\partial_x u(t, X_t, \Theta^t)
= \overline{\rho} \sqrt{\Vbr_t}\partial_x u(t, X_t, \Theta^t)$ and therefore
$$
\Df \ubr_t(\Xbr^{0,x}_t) = \partial_x u(t, X_t, \Theta^t).
$$
on the other hand,
\begin{align}
\rho \sqrt{\Vbr_t}\Df \ubr_t(\Xbr^{0,x}_t) + \psi_t(\Xbr^{0,x}_t)
& =\rho \chi(t,V_t)\partial_x u(t, X_t, \Theta^t) + \langle \partial_\omega u(t, X_t, \Theta^t), K^t \rangle \\
& = \rho \sqrt{\Vbr_t}\Df \ubr_t(\Xbr^{0,x}_t) + \langle \partial_\omega u(t, X_t, \Theta^t), K^t \rangle.
\end{align}
and hence 
$\displaystyle 
\psi_t(\Xbr^{0,x}_t)=\langle \partial_\omega u(t, X_t, \Theta^t), K^t \rangle$.

\begin{remark}
This first identification highlights the fact that in~\cite{bayer2022pricing}, the authors have to `somehow' hide the stochasticity inside~$\ubr_t$ 
and only emphasises that~$\ubr_t(x)$ is $\Ff_t^W$-measurable.
In particular, this is what makes it necessary for them to introduce the additional process $\psi_t(x)$ which, as we showed, is the pathwise derivative of~$u$ itself, namely $\langle \partial_\omega u(t, X_t, \Theta^t), K^t \rangle$.
\end{remark}

\subsection{Identification of the terms in the backward part of the FBSDEs}
In the rough volatility setting, the FBSDE in~\cite{bayer2022pricing}--see also~\eqref{eq:BSDE_Bayer}--reads
    \begin{align}\label{eq:Y_bayer}
        \left\{
        \begin{array}{rll}
             \D \Ybr^{t,x}_s & = -\Zbr^{t,x}_{1,s} \D W_s - \Zbr^{t,x}_{2,s} \D B_s,\qquad & \Ybr^{t,x}_T = G\left(\E^{\Xbr^{t,x}_T}\right),\\
             \D \Xbr^{t,x}_s & = \displaystyle -\half \Vbr_s \D s + \sqrt{\Vbr_s} (\rho \D W_s + \overline{\rho}\D B_s),\qquad & \Xbr^{t,x}_t= x.
        \end{array}
        \right.
    \end{align}
Under their set of assumptions this equation has a unique solution and, in particular, Theorem~\ref{thm:Bayer} gives an explicit representation for such a solution.
As highlighted at the beginning of the previous subsection, on the PPDEs' side, in the rough volatility setting, Equation \eqref{eq:FBSDE_our0} reduces to the following couple of one-dimensional equations
    \begin{equation*}
    \left\{
    \begin{array}{rl}
        X^{t,(x,\omega)}_s 
        & = \displaystyle x -\half \int_t^s \chi\left(r,V^{t,\omega}_r\right)^2 \D r
         +\rho\int_t^s \chi\left(r,V^{t,\omega}_r\right)\D W_r
         +\rrho\int_t^s \chi\left(r,V^{t,\omega}_r\right) \D B_r,\\
        V^{t,\omega}_s & = \displaystyle \omega_s + \int_t^s K(s,r) \D W_r,
    \end{array}
    \right.
    \end{equation*}
    while the backward equation~\eqref{eq:FBSDE_our0} becomes 
    \begin{align}\label{eq:Y_our}
        \D Y_r^{t,(x, \omega)} =  - \Zb^{t,(x, \omega)}_r \cdot \D \bm{W}_r, 
    \end{align}
    with boundary condition
    $Y_T^{t, (x, \omega)} = G(\E^{X^{t,(x, \omega)}_T})$.
    In particular, for the process~$\Zb^{t,\ww}$, we have
    \begin{align}
        \Zb^{t,\ww}_r
        =\begin{bmatrix}
        Z^{t,\ww}_{1,r}\\
        Z^{t,\ww}_{2,r}
        \end{bmatrix}
        & 
        = \left\langle \partial_{\bm{\omega}} u\left(r,X_r^{t,(x,\omega)}, \Theta^{t,(x,\omega)}\right), \sigma^{r, X_r^{t,(x,\omega)}, \Theta^{t,(x,\omega)}}\right\rangle\\
        & = \left\langle 
        \begin{bmatrix}
            \partial_xu\left(r,X_r^{t,(x,\omega)}, \Theta^{t,(x,\omega)}\right)\\
            \partial_\omega u\left(r,X_r^{t,(x,\omega)}, \Theta^{t,(x,\omega)}\right)
        \end{bmatrix},
        \begin{bmatrix}
        \rrho\chi\left(r,V^{t,\omega}_r\right) & {\rho}\chi\left(r,V^{t,\omega}_r\right)\\
            0 & K(r,\cdot)
        \end{bmatrix}\right\rangle \\
        & =
        \begin{bmatrix}
            \rrho\partial_xu\left(r,X_r^{t,(x,\omega)}, \Theta^{t,(x,\omega)}\right)\chi\left(r,V^{t,\omega}_r\right)\\
            {\rho}\partial_xu\left(r,X_r^{t,(x,\omega)}, \Theta^{t,(x,\omega)}\right)\chi\left(r,V^{t,\omega}_r\right)
             + \langle \partial_\omega u\left(r,X_r^{t,(x,\omega)}, \Theta^{t,(x,\omega)}\right), K^r\rangle
        \end{bmatrix},
    \end{align}
    and so
    \begin{equation*}
    \left\{
    \begin{array}{rl}
        Z^{t,\ww}_{1,r} & = \displaystyle \rrho\partial_xu\left(r,X_r^{t,(x,\omega)}, \Theta^{t,(x,\omega)}\right)\chi\left(r,V^{t,\omega}_r\right),\\
        Z^{t,\ww}_{2,r} & = \displaystyle {\rho}\partial_xu\left(r,X_r^{t,(x,\omega)}, \Theta^{t,(x,\omega)}\right)\chi\left(r,V^{t,\omega}_r\right) + \langle \partial_\omega u\left(r,X_r^{t,(x,\omega)}, \Theta^{t,(x,\omega)}\right), K^r\rangle.
        \end{array}
        \right.
    \end{equation*}
    In particular, since Equations~\eqref{eq:Y_bayer} and \eqref{eq:Y_our} admit unique solutions and are actually the same equation, we can conclude that
    $$
        \Ybr^{t,x}_s =Y^{t,(x, \omega)}_s,\qquad
        \Zbr^{t,x}_{1,s} = Z^{t,(x, \omega)}_{2,s},\qquad
        \Zbr^{t,x}_{2,s} = Z^{t,(x, \omega)}_{2,s},
    $$
    and therefore
    $$
        \ubr_s(\Xbr^{t,x}_s) = u\left(s,X^{t, (x,\omega)}_s\right),\qquad
        \Df \ubr_s(\Xbr^{t,x}_s) = \partial_x u\left(s,X^{t, (x,\omega)}_s\right),\qquad
        \psi_s(\Xbr^{t,x}_s)= \left\langle \partial_\omega u\left(s,X^{t, (x,\omega)}_s\right), K^s\right\rangle.
    $$
    \begin{remark}
        Notice that we already knew (only) the following identity

        $$
        \Ybr^{t,x}_t
        = \ubr_t\left(\Xbr^{t,x}_t\right)
        = \EE\left[G\left(\E^{\Xbr^{t,x}_T}\right)|\Ff_t\right]
        = \EE\left[G\left(\E^{X_T}\right)|\Ff_t\right]
        = u\left(t, X_t, \Theta^t \right)
        = Y^{t,(x, \omega)}_t.
        $$
\end{remark}

\bibliographystyle{siam}
\bibliography{references}

\end{document}